\newtheorem{theorem}{Theorem}[section]
\newtheorem{lemma}[theorem]{Lemma}
\newtheorem{remark}[theorem]{Remark}
\newenvironment{proof}[1][Proof]{\noindent\textbf{#1.} }{\rule{0.5em}{0.5em}}
\begin{document}

\title{On the rate of convergence in the central
limit theorem\\ for hierarchical Laplacian}
\author{ Alexander Bendikov \thanks{%
Research of A.~Bendikov was supported by National Science Centre (Poland),
Grant 2015/17/B/ST1/00062} \and Wojciech Cygan \thanks{%
Research of W.~Cygan was supported by National Science Centre (Poland),
Grant 2015/17/B/ST1/00062 and by Austrian Science Fund project FWF P24028.}}
\date{}
\maketitle

\begin{abstract}
Let $(X,d)$ be a proper ultrametric space. Given a measure $m$ on $X$ and a
function $C(B)$ defined on the set of all non-singleton balls $B$ we
consider the hierarchical Laplacian $L=L_{C}$. Choosing a sequence $%
\{\varepsilon (B)\}$ of i.i.d. random variables we define the perturbed
function $C(B,\omega )$ and the perturbed hierarchical Laplacian $%
L^{\omega }=L_{C(\omega )}.$ We study the arithmetic means $\overline{%
\lambda }(\omega )$ of the $L^{\omega }$-eigenvalues. Under some mild
assumptions the normalized arithmetic means $\left( \overline{\lambda }-%
\mathbb{E}\overline{\lambda }\right) /\sigma \left( \overline{\lambda }%
\right) $ converge in law to the standard normal distribution. In this note
we study convergence in the total variation distance and estimate the rate
of convergence.
\end{abstract}
\thanks{\textit{Mathematics Subject Classification}: 
12H25, 
60F05, 
94A17, 
47S10, 
60J25. 
\\
\textit{Key words}: ultrametric space, $p$-adic numbers, hierarchical
Laplacian, fractional derivative, total variation and entropy distance.}

\section{Introduction}

The concept of hierarchical lattice and hierarchical distance was proposed
by F.J. Dyson in his famous paper on the phase transition for $\mathbf{1D}$
ferromagnetic model with long range interaction~\cite{Dyson2}. The notion of
the hierarchical Laplacian $L$, which is closely related to the Dyson's
model was studied in several mathematical papers \cite{Kvitchevski1}, \cite{Kvitchevski2}, 
\cite{Kvitchevski3}, \cite{Molchanov}, 
\cite{AlbeverioKarwowski}, \cite{BGPW}, \cite{BendikovKrupski} and
 \cite{BCW}. These papers contain some basic information about $L$
(the spectrum, the Markov semigroup, resolvent etc). In the case when the
state space is discrete and the hierarchical lattice satisfies some symmetry
conditions (homogenuity, self-similarity etc) it can be identified with some
discrete infinitely generated Abelian group $G$ equipped with a translation
invariant ultrametric $d$ and with a Haar measure $m$. The Markov semigroup $%
P^{t}=\exp (-tL)$ acting on $L^{2}(G,m)$ becomes then symmetric, translation
invariant and isotropic. In particular, $\mathsf{Spec}(L)$ is pure point and
all eigenvalues have infinite multiplicity.

In paper \cite{BGMS} we study a class of random perturbations of
hierarchical Laplacians $L.$ Each outcome $L^{\omega },\omega \in \Omega ,$
of the perturbed hierarchical Laplacian is by itself a hierarchical
Laplacian whence its spectrum $\mathsf{Spec}(L^{\omega })$ is still pure
point (with compactly supported eigenfunctions). Using the classical average
procedure one defines the integrated density of states. Contrary to the
deterministic case it may admit a continuous density w.r.t. $m$, the density
of states. The density of states detects the spectral bifurcation from the
pure point spectrum to the continuous one. The eigenvalues form locally a
Poisson point process with intensity given by the density of states. The
normalized sequence of arithmetic means of $L^{\omega }$-eigenvalues
converges in law to the standard normal distribution. In this note we study
the convergence in relative entropy, in particular in the total variation
distance. Under certain mild assumptions we establish the rate of
convergence.

\section{Preliminaries}

\label{Prem_sec}

\paragraph{Hierarchical lattice.}

Let $(X,d)$ be a proper non-compact ultrametric space. 
Recall that \emph{proper} metric space means that all closed balls are
compact, and \emph{ultrametric} $d$ is a metric which is an ultrametric,
that is 
\begin{equation*}
d(x,y)\leq \max \{d(x,z),d(z,y)\}.
\end{equation*}%
A basic consequence is that any two balls are either disjoint or one is
contained in the other. The collection of all balls with a fixed positive
radius forms a countable partition of $X$, and decreasing the radius leads
to a refined partition. This is consistent with the structure of
\textquotedblleft Hierarchical lattice\textquotedblright\ as in the old papers,
going back to \cite{Dyson2}.

Let $m$ be a Radon measure on $X$ such that $m(X)=\infty $ and $m(B)>0$ for
each closed ball which is not a singleton, and $m(\{a\})>0$ if and only if $%
a $ is an isolated point of $X$. Let $\mathcal{B}$ be the collection of all
balls with $m(B)>0$. Each $B\in \mathcal{B}$ has a unique \emph{predecessor}
or \emph{parent} $B^{\prime }\in \mathcal{B}\setminus \{B\}$ which contains $%
B$ and is such that $B\subseteq D\subseteq B^{\prime }$ for $D\in \mathcal{B}
$ implies $D\in \{B,B^{\prime }\}$. In this case, $B$ is called a \emph{%
successor} of $B^{\prime }$. Since $X$ is proper, each non-singleton ball
has only finitely many (and at least 2) successors. Their number is the 
\emph{degree} of the ball.

\paragraph{Hierarchical Laplacian.}

We consider a function $C:\mathcal{B}\rightarrow (0\,,\,\infty )$ which
satisfies, for all $B\in \mathcal{B}$ and all non-isolated $a\in X$, 
\begin{equation}
\begin{aligned} \lambda(B)&=\sum\limits_{D\in\mathcal{B}\,:\,D\supseteq
B}C(D)<\infty\,, \quad\text{and} \\
\lambda(\{a\})&=\sum_{B\in\mathcal{B}\,:\,B \ni a}C(B)=\infty\,.
\end{aligned}  \label{eq:C1condition}
\end{equation}%
Let $\mathcal{F}$ be the set of all locally constant functions having
compact support. It is known that $\mathcal{F}$ consists of continuous
functions and is dense in all $L^{p}(X,m).$ Given the space $X$, the measure 
$m$ and the function $C:\mathcal{B}\rightarrow (0\,,\,\infty )$, we define
(pointwise) the hierarchical Laplacian $L_{C}\,$: for each $f$ in $\mathcal{F%
}$ and $x\in X$ we set 
\begin{equation*}
L_{C}f(x):=\sum\limits_{B\in \mathcal{B}\,:\,B\ni x}C(B)\left( f(x)-\frac{1}{%
m(B)}\int_{B}f\,dm\right) .  
\end{equation*}%
The operator $(L_{C},\mathcal{F})$ acts in $L^{2}(X, m)$, is symmetric
and admits a complete system of eigenfunctions $\{f_{B}:B\in \mathcal{B}\}$ 
given by 
\begin{equation*}
f_{B}=\frac{\mathbf{1}_{B}}{m(B)}-\frac{\mathbf{1}_{B^{\prime }}}{%
m(B^{\prime })}\,.  
\end{equation*}%
The eigenvalue
corresponding to $f_{B}$ depends only on $B^{\prime }$ and is $\lambda
(B^{\prime })$, as given in \eqref{eq:C1condition}. Since all $f_{B}$ belong
to $\mathcal{F}$ and the system $\{f_{B}:B\in \mathcal{B}\}$ is complete we
conclude that $(L_{C},\mathcal{F})$ is an essentially self-adjoint operator.
By a slight abuse of notation, we shall write $(L_{C},\mathsf{Dom}_{L_{C}})$
for its unique self-adjoint extension. For all of this we refer to \cite%
{BendikovKrupski}, \cite{BGP} and \cite{BGPW}. 

\paragraph{Homogeneous hierarchical Laplacian.}\label{sec:homogeneous}

For the analysis undertaken in this paper, we require that the ultrametric
measure space $(X,d,m)$ and the hierarchical Laplacian $L_{C}$ are \emph{%
homogeneous,} that is there exists a group 
of isometries of $(X,d)$ which

\begin{itemize}
\item acts transitively on $X$, and

\item leaves both the reference measure $m$ and the function $C(B)$
invariant.
\end{itemize}

The first assumption implies that $(X,d)$ is either discrete or perfect.
Basic examples which we have in mind are

\begin{enumerate}
\item $X=\mathbb{Q}_{p}$ -- the ring of $p$-adic numbers, where $p \ge 2$
(integer).
\item $X = 
\bigoplus_{j=1}^{\infty} \mathbb{Z}/p_j\,\mathbb{Z}$ -- the direct sum of
countably many cyclic groups.
\item $X=S_{\infty}\,$ -- the infinite symmetric group, that is, the group
of all permutations of the positive integers that fix all but finitely many
elements.
\end{enumerate}

The homogeneity assumptions and the fact that $X$ is non-compact imply that
we have the following two cases.

\begin{itemize}
\item[Case 1.] $\;X$ is perfect, and $\{d(x,y):y\in X\}=\{0\}\cup
\{r_{k}:k\in \mathbb{Z}\}$, where $r_{k}<r_{k+1}$ with $\lim\limits_{k%
\rightarrow \infty }r_{k}=\infty $ and $\lim\limits_{k\rightarrow -\infty
}r_{k}=0\,$;

\item[Case 2.] $\;X$ is countable, and $\{d(x,y):y\in X\}=\{r_{k}:k\in 
\mathbb{N}_{0}\}$, where $r_{0}=0$, $r_{k}<r_{k+1}$ with $%
\lim\limits_{k\rightarrow \infty }r_{k}=\infty \,$.
\end{itemize}

In both cases, we let $\mathcal{B}_{k}$ be the collection of all closed
balls of diameter $r_{k}$. This is a partition of $X$, and it is finer than $%
\mathcal{B}_{k+1}$. By homogeneity, all balls in $\mathcal{B}_{k}$ are
isometric. In particular, the number $n_{k}$ of successor balls is the same
for each ball in $\mathcal{B}_{k}\,$, where $k\in \mathbb{Z}$ in Case 1, and 
$k\in \mathbb{N}_{0}$ in Case 2. We notice that the degree sequence $(n_{k})$
satisfies $2\leq n_{k}<\infty $.

It is useful to associate an infinite \emph{tree} with $X$, see Figure 1. Its vertex set
is $\mathcal{B}$, and there is an edge between any $B\in \mathcal{B}$ and
its predecessor $B^{\prime }$. In this situation, $\mathcal{B}_{k}$ is the 
\emph{horocyle} $H_{k}$ of the tree with index $k$, and $X$ is the (lower) \emph{%
boundary} of that tree. For more details see \cite{BGPW}, and \cite%
{Figa-Tal1}, \cite{Figa-Tal2}.

For having homogeneity, the reference measure $m$ is also uniquely defined
up to a constant factor. If we set $m(B)=1$, for each $B\in \mathcal{B}_{0}$%
, then, for any $k\in \mathbb{Z}$ (Case 1), resp. $k\in \mathbb{N}_{0}$
(Case 2), and for $B\in \mathcal{B}_{k}$, 
\begin{equation*}
m(B)=%
\begin{cases}
n_{1}n_{2}\cdots n_{k} & \text{for $k>0$, and} \\ 
1/(n_{k+1}n_{k+2}\cdots n_{0}) & \text{for $k<0$ in Case 1.}%
\end{cases}%
\end{equation*}%
This determines $m$ uniquely as a measure on the Borel $\sigma $-algebra of $%
X$. Regarding the hierarchical Laplacian, homogeneity means that $C(B)=C_{k}$
is the same for each $B\in \mathcal{B}_{k}\,$. Along with the function $C(B)$%
, also the eigenvalues of \eqref{eq:C1condition} depend only on $k$: 
\begin{equation*}
\lambda (B)=\lambda _{k}\;\text{ for all }\;B\in \mathcal{B}_{k}\,,\quad 
\text{where}\quad \lambda _{k}=\sum_{\ell \geq k}C_{\ell }\,.
\end{equation*}

As noticed in \cite{Figa-Tal1}, \cite{Figa-Tal2}, the homogeneous
ultrametric measure space $(X,m)$ can then be identified with a locally
compact totally disconnected group $G$ equipped with its Haar measure. In
fact, we may even identify it with an Abelian group. If $(r_{k})$ is the
sequence of distances defied above then $G_{k}=B(e,r_{k})$ is a compact-open
subgroup of $G\equiv X$, 
\begin{equation*}
G=\bigcup_{k}G_{k}\,,\quad \text{and}\quad n_{k}=[G_{k}:G_{k-1}]
\end{equation*}%
gives the degree sequence. The collection $\mathcal{B}_{k}$ of balls with
diameter $r_{k}$ consists of the left cosets of $G_{k}$ in $G$. We usually
normalize the Haar measure $m$ such that $m(G_{0})=1$.

$$
\beginpicture \label{fig}

\setcoordinatesystem units <.8mm,1.3mm>

\setplotarea x from -10 to 104, y from -14 to 46

\arrow <6pt> [.2,.67] from 2 2 to 40 40

\plot 32 32 62 2 /

 \plot 16 16 30 2 /

 \plot 48 16 34 2 /

 \plot 8 8 14 2 /

 \plot 24 8 18 2 /

 \plot 40 8 46 2 /

 \plot 56 8 50 2 /

 \plot 4 4 6 2 /

 \plot 12 4 10 2 /

 \plot 20 4 22 2 /

 \plot 28 4 26 2 /

 \plot 36 4 38 2 /

 \plot 44 4 42 2 /

 \plot 52 4 54 2 /

 \plot 60 4 58 2 /



\arrow <6pt> [.2,.67] from 99 29 to 88 40

 \plot 66 2 96 32 /

 \plot 70 2 68 4 /

 \plot 74 2 76 4 /

 \plot 78 2 72 8 /

 \plot 82 2 88 8 /

 \plot 86 2 84 4 /

 \plot 90 2 92 4 /

 \plot 94 2 80 16 /


\setdots <3pt>
\putrule from -4.8 4 to 102 4
\putrule from -4.5 8 to 102 8
\putrule from -2 16 to 102 16
\putrule from -1.7 32 to 102 32

\setdashes <2pt> \linethickness=.5pt
\putrule from -2 -5 to 102 -5

\put {$\vdots$} at 32 0
\put {$\vdots$} at 64 0

\put {$\dots$} [l] at 103 6
\put {$\dots$} [l] at 103 24

\put {$H_{2}$} [l] at -14 32
\put {$H_{1}$} [l] at -14 16
\put {$H_0$} [l] at -14 8
\put {$H_{-1}$} [l] at -14 4
\put {$G$} [l] at -14 -5
\put {$\vdots$} at -10 0
\put {$\vdots$} [B] at -10 36

\put {$\scriptstyle\bullet$} at 8 8
\put {$G_0$} [rb] at 7.2 8.8
\put {$\scriptstyle\bullet$} at 16 16
\put {$G_{1}$} [rb] at 15.2 16.8
\put {$\scriptstyle\bullet$} at 32 32
\put {$G_{2}$} [rb] at 31.2 32.8
\put {$\scriptstyle\bullet$} at 36 4 
\put {$S$} [rb] at 35.2 4.8
\put {$\scriptstyle\bullet$} at 56 8 
\put {$T$} [lb] at 56.8 8.8
\put {$\scriptstyle\bullet$} at 48 16 
\put {$S \curlywedge T$} [lb] at 48.8 16.8

\put {$\varpi$} at 42 42
\put {$\varpi$} at 86 42
\put {\rm Figure 1. Tree of balls $\mathcal{T}(X)$ with forward degree $n_l=2$.} at 47 -11

\endpicture
$$

\paragraph{Random perturbations.}

Let $L_{C}$ be the homogeneous hierarhical Laplacian. Let $\{\varepsilon
(B)\}_{B\in \mathcal{B}}$ be a sequence of symmetric i.i.d. random variables
defined on the probability space $(\Omega ,\mathbb{P})$ and taking values in
some small interval $[-\epsilon ,\epsilon ]\subset \left( -1,1\right) .$ We
define the perturbed function $C(B,\omega )$ and the perturbed
hierarchical Laplacian as follows:%
\begin{equation*}
C(B,\omega )=C(B)(1+\varepsilon (B,\omega ))
\end{equation*}%
and%
\begin{equation*}
L^{\omega }f(x)=L_{C(\omega )}f(x)=\sum\limits_{B\in \mathcal{B}:\text{ }%
x\in B}C(B,\omega )\left( f(x)-\frac{1}{m(B)}\int_{B}f\,dm\right) .
\end{equation*}%
Evidently $L^{\omega }$ may well be non-homogeneous for some $\omega \in
\Omega $. Still it has a pure point spectrum for all $\omega $ but the
structure of the closed set $\mathsf{Spec}(L^{\omega })$ can be quite
complicated, see \cite{BendikovKrupski} for various examples.

Let us fix a horocyle $H$ and compute the eigenvalue $\lambda (B,\omega )$
for $B$ in $H$. Without loss of generality we may assume that $H=H_{0}$. Let 
$\varpi $ be the Alexandrov point and let $\{B_{k}\}_{k\geq 0}$ be the
unique infinite geodesic path in $\mathcal{T}(X)$ from $B$ to $\varpi $. We
have 
\begin{align*}
\lambda (B,\omega )& =\sum\limits_{k\geq 0}C(B_{k},\omega
)=\sum\limits_{k\geq 0}C_{k}\left( 1+\varepsilon (B_{k},\omega )\right) 
\\
& =\lambda _{0}\left( 1+\sum\limits_{k\geq 0}a_{k}\varepsilon (B_{k},\omega
)\right) =\lambda _{0}\left( 1+U(B,\omega )\right) ,
\end{align*}%
where $a_{k}=C_{k}/\lambda _{0},$ and%
\begin{equation}
U(B,\omega )=\sum\limits_{k\geq 0}a_{k}\varepsilon (B_{k},\omega ).
\label{U-epsilon}
\end{equation}%
Notice that $\sum_{k\geq 0}a_{k}=1$ and that $\{U(B)\}_{B\in H}$ are
(dependent) identically distributed symmetric random variables taking values
in some symmetric interval $I\subsetneqq \left( -1,1\right) .$ In
particular, $\mathbb{E}\lambda (B,\cdot )=\lambda _{0}$.

\paragraph{Normal approximation.}

Let us choose a reference point $o\in X,$ say the neutral element in our
group-identification, and let $\mathcal{O}$ denote the family of all balls $%
O $ centered at $o.$ Let $\mathcal{B}_{0}(O),O\in \mathcal{O},$ be the set
of all balls $B$ in $O$ each of which belongs to the horocycle $H_{0}$. We
set
\begin{equation*}
\overline{\lambda }_{O}(\omega )=\frac{1}{\left\vert \mathcal{B}%
_{0}(O)\right\vert }\sum\limits_{B\in \mathcal{B}_{0}(O)}\lambda (B,\omega )
\end{equation*}%
and\footnote{In the paper we use both $\mathrm{Var(X)}$ and $\sigma (X)^2$ to denote the variance of the random variable $X$.}  
\begin{equation*}
\Lambda _{O}\left( \omega \right) =\frac{\overline{\lambda }_{O}(\omega
)-\lambda _{0}}{\sigma \left( \overline{\lambda }_{O}\right) },
\end{equation*}%
where $\left\vert 
\mathcal{B}_{0}(O)\right\vert $ stands for cardinality of the finite set $%
\mathcal{B}_{0}(O)$.

According to \cite[Theorem 3.2]{BGMS}, as $O\rightarrow \varpi ,$ $\Lambda
_{O}$ converges in law to the standard normal random variable $Z$ whenever
the following condition holds 
\begin{equation}
1/\kappa \leq C(B)\left( \mathsf{diam}(B)\right) ^{\delta /2}\leq \kappa ,
\label{C-B condition11}
\end{equation}%
or equivalently, 
\begin{equation}
1/2\kappa \leq \lambda (B)\left( \mathsf{diam}(B)\right) ^{\delta /2}\leq
2\kappa ,  \label{lambda-B condition}
\end{equation}%
for some $\kappa >0$ and $\delta \geq 1$,

The main aim of this paper is to strengthen this result, namely we want to
prove that under certain conditions the convergence $\Lambda_O\to Z$ holds
in the total variation distance, see Theorem \ref{Thm_approx}.

\paragraph{Metric matters.}\label{sec: Dist}

Given two probability distributions $P$ and $Q$ on the real line the total
variation distance from $P$ to $Q$ is defined as 
\begin{align*}
\Vert P - Q \Vert _{TV} = 2\!\!\! \sup_{A\in \mathrm{B}(\mathbb{R})}%
\Big\vert P(A) - Q(A) \Big\vert.
\end{align*}
Notice that if $P$ and $Q$ are absolutely continuous with respect to some
measure $\theta$ and $f$ and $g$ are their densities then 
\begin{align*}
\Vert P- Q \Vert _{TV} = 2\Vert f-g \Vert_{L^1(\text{d} \theta)}.
\end{align*}
The relative entropy (also called the Kullback-Leibler distance) of $P$ with
respect to $Q$ is defined as 
\begin{align*}
D(P\! \parallel \! Q) = \int_{\mathbb{R}}\log_2 \left( \frac{\text{d} P}{%
\text{d} Q}\right) \text{d} P,
\end{align*}
when $P $ is absolutely continuous with respect to $Q$ and $D(P\!\!
\parallel \!\! Q) = +\infty$ otherwise.

Let $X$ be a random variable with the density $p$ with respect to the
Lebesgue measure. Its differential entropy is defined as 
\begin{align*}
h(X) = - \int_{\mathbb{R}}p(x)\log_2 p(x)\, \text{d} x.
\end{align*}
We also consider the following quantity 
\begin{align*}
D(X) = h(W) - h(X) = \int_{\mathbb{R}}p(x)\log_2 \frac{p(x)}{q(x)}\, \text{d}
x,
\end{align*}
where $W$ is a normal random variable with the density $q$ (with resp. to
the Lebesgue measure) such that $\mathbb{E}(W) = \mathbb{E} (X)$ and $%
\mathrm{Var}(W) = \mathrm{Var}(X)$. Observe that $D(X) = D(P_X\!\parallel \!
P_W)$, where $P_X$ denotes the distribution of the random variable $X$. We
have $D(a+bX)= D(X)$, $0\neq b,a\in \mathbb{R}$, and thus $D$ is mean and
variance invariant. Moreover, if $P_X$ is equal to $N(0,\sigma^2)$ then 
\begin{align}  \label{entropy_normal}
h(X) = \frac{\log_2(2\pi e\sigma^2)}{2}\quad \mathrm{and}\quad D(X)=0.
\end{align}

We recall that by the Pinsker inequality \cite{Pinsker}, the entropic
distance dominates the total variation, that is 
\begin{align*}  
\Vert P_X - P_W\Vert _{TV}^2 \leq 2D(X).
\end{align*}

We shall need the following bound on the relative entropy expressed in terms
of densities \cite[Lemma 2.2]{Bobkov}. Let $X$ be a random variable whose
distribution function $F_{X}(x)$ is absolutely continuous with $%
F_{X}^{\prime }(x)=$ $p(x),$ assume further that the first absolute moment
of $X$ is finite. Let $Z$ be the standard normal random variable and $%
F_{Z}^{\prime }(x)=\phi (x)$ be its density. For any $T\geq 0$ we have 
\begin{equation}  \label{Entropy_density}
\begin{split}
D(P_{X}\!\parallel \!P_{Z})& \leq e^{-T^{2}/2}+\sqrt{2\pi }%
\int_{-T}^{T}\left( p(x)-\phi (x)\right) ^{2}e^{x^{2}/2}\text{d}x \\
& \quad +\frac{1}{2}\int_{|x|\geq T}x^{2}p(x)\text{d}x+\int_{|x|\geq
T}p(x)\log_2 p(x)\text{d}x.
\end{split}%
\end{equation}

\paragraph{Berry-Essen bounds.}

Let $X_{1},X_{2},\ldots ,X_{n}$ be independent random variables with $%
\mathbb{E}(X_{k})=0$ and with finite variances $\sigma _{k}^{2}=\mathbb{E}%
(X_{k}^{2})$. Assuming that for some $s>2$ and all $1\leq k\leq n,$ $\mathbb{%
E}|X_{k}|^{s}<\infty $ we define the following quantities 
\begin{equation*}
B_{n}=\sum_{k=1}^{n}\sigma _{k}^{2}\text{ ,\ \ \ \ }S_{n}=\frac{X_{1}+\ldots
+X_{n}}{B_{n}^{1/2}},
\end{equation*}%
and 
\begin{equation*}
\mathbb{L}_{s}=\frac{1}{B_{n}^{s/2}}\sum_{k=1}^{n}\mathbb{E}|X_{k}|^{s},
\end{equation*}%
the Lyapunov ratios. Let $P_{n}$ be the distribution of the random variable $%
S_{n}$ and $\mathcal{N}$ be the standard normal distribution. One of the
main ingredients in our analysis is the following result, see {\cite[%
Theorems 1.1 and 1.2]{Bobkov}}.

\begin{theorem}
\label{THM_Berry-Essen} Assume that $D(X_{k})\leq A$ for some positive
constant $A$, and all $1\leq k\leq n.$ The following statements hold true

\begin{enumerate}
\item[1.] $\mathbb{L}_{3}<+\infty $ implies $\Vert P_{n}-\mathcal{N}\Vert
_{TV}\leq C\mathbb{L}_{3},$

\item[2.] $\mathbb{L}_{4}<+\infty $ implies $D(S_{n})\leq C^{\prime }\mathbb{%
L}_{4},$
\end{enumerate}
where the constants $C$ and $C^{\prime }$ depend only on $A$.
\end{theorem}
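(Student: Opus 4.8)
This is \cite[Theorems~1.1 and~1.2]{Bobkov}; for completeness we only indicate the structure of the argument.

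The plan is to reduce both assertions to estimates on the density of $S_n$ and on its characteristic function. A density exists because a probability law at finite relative entropy distance from a Gaussian is necessarily absolutely continuous; thus each $X_k$ has a density $f_k$, and hence so does $S_n$, say $p_n$. Write $v_k(t)$ for the characteristic function of $X_k/B_n^{1/2}$, so that $\widehat p_n(t)=\prod_{k=1}^{n}v_k(t)$, and let $\phi$ and $e^{-t^2/2}$ denote the density and characteristic function of the standard normal law $\mathcal N$.

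First I would control $\widehat p_n$ near the origin. On a range $|t|\lesssim\mathbb{L}_3^{-1}$, a Taylor/cumulant expansion of $\log v_k$ -- using $\mathbb{E}X_k=0$, $\mathbb{E}X_k^2=\sigma_k^2$, $\sum_k\sigma_k^2=B_n$ and finiteness of $\mathbb{L}_s$ -- yields the classical bound $|\widehat p_n(t)-e^{-t^2/2}|\le C\mathbb{L}_3(1+|t|^3)e^{-t^2/4}$, which is what part~1 needs, and, when $\mathbb{L}_4<\infty$, the sharper one--term Edgeworth approximation $\widehat p_n(t)=e^{-t^2/2}\bigl(1+E_n(t)\bigr)+O\bigl(\mathbb{L}_4\,e^{-t^2/4}\bigr)$ on a comparable range, $E_n$ being $\mathbb{L}_3$ times an explicit cubic. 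Next -- and this I expect to be the real obstacle -- I would need a smoothing (Cram\'er-type) estimate: $\prod_k|v_k(t)|$ is small for $|t|$ bounded away from $0$. Here the hypothesis $D(X_k)\le A$ is used quantitatively, not merely for absolute continuity: a bound on the relative entropy of $f_k$ forces it to share a definite amount of mass (depending only on $A$) with the Gaussian of the same variance, so the convolution of a controlled number $m=m(n)\to\infty$ of the $f_k$ is uniformly bounded, its characteristic function lies in $L^2$, and $\prod_k|v_k(t)|\le\rho^{\,m}$ off a fixed neighbourhood of the origin for some $\rho=\rho(A)<1$.

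Finally I would assemble the pieces. For part~1, Fourier inversion over the two $t$--ranges gives $\sup_x|p_n(x)-\phi(x)|\le\frac1{2\pi}\int|\widehat p_n(t)-e^{-t^2/2}|\,\text{d}t\le C\mathbb{L}_3$; then, writing $\|P_n-\mathcal N\|_{TV}=2\|p_n-\phi\|_{L^1}$, one splits at $|x|=T\asymp(\log 1/\mathbb{L}_3)^{1/2}$, bounds the central part by $2T\cdot C\mathbb{L}_3$ and the tails by $\int_{|x|>T}(p_n+\phi)\,\text{d}x\lesssim\mathbb{L}_3$ (a second--moment estimate), and absorbs the logarithmic factor into the constant. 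For part~2, since $S_n$ has mean $0$ and variance $1$ we have $D(S_n)=D(P_{S_n}\!\parallel\!P_{\mathcal N})$, and I would feed the $\mathbb{L}_4$--density estimates into \eqref{Entropy_density}: with the cutoff $T$ of order $(\log 1/\mathbb{L}_4)^{1/2}$ the term $e^{-T^2/2}$ is $O(\mathbb{L}_4)$; the term $\sqrt{2\pi}\int_{-T}^{T}(p_n-\phi)^2e^{x^2/2}\,\text{d}x$ is $O(\mathbb{L}_4)$ once the Edgeworth correction is subtracted and the elementary inequality $\mathbb{L}_3\le\mathbb{L}_4^{1/2}$ is invoked; and the two tail terms $\tfrac12\int_{|x|\ge T}x^2p_n\,\text{d}x$ and $\int_{|x|\ge T}p_n\log_2 p_n\,\text{d}x$ are $O(\mathbb{L}_4)$ via the uniform bound on $p_n$ from the smoothing step together with a fourth--moment tail estimate for $S_n$ (this is the point at which $T$ must be chosen with some care). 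Summing the four contributions yields $D(S_n)\le C'\mathbb{L}_4$, and the dependence of $C$ and $C'$ on $A$ alone is inherited from $\rho(A)$ and the bound on $\|p_n\|_\infty$.
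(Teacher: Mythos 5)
The paper does not prove this statement at all: Theorem \ref{THM_Berry-Essen} is imported verbatim from \cite[Theorems 1.1 and 1.2]{Bobkov} and used as a black box, so there is no internal proof to compare against. Your attribution is correct, and your sketch faithfully reproduces the architecture of the argument in that reference: an Edgeworth-type expansion of the characteristic function on $|t|\lesssim \mathbb{L}_3^{-1}$, an entropy-driven smoothing bound away from the origin, Fourier inversion to get a uniform density estimate, and then the total variation and entropy bounds assembled by truncation at $T\asymp(\log 1/\mathbb{L}_s)^{1/2}$ together with \eqref{Entropy_density} and $\mathbb{L}_3\leq\mathbb{L}_4^{1/2}$. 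The one place your outline is genuinely thin is the one you yourself flag: for non-identically distributed summands the variances $\sigma_k^2$ may be very unbalanced, so $|v_k(t)|$ need not be bounded away from $1$ on any fixed neighbourhood-complement of the origin uniformly in $k$, and the Cram\'er-type smoothing step requires the more careful bookkeeping carried out in \cite{Bobkov}; since the present paper simply cites that result, this is not a gap relative to the paper.
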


We shall need the following technical result which we will prove here for completeness.

\begin{lemma}
\label{lemma_entropy} Let $X,Y$ be two independent random variables such that%
$\mathrm{Var}(X)+\mathrm{Var}(Y)=1$. Then 
\begin{equation*}
D(X+Y)\leq \mathrm{Var}(X)D(X)+\mathrm{Var}(Y)D(Y).
\end{equation*}%
In particular, for any collection of independent random variables $\{X_{k}\}$%
, 
\begin{equation*}
D\left( \sum_{k=1}^{n}X_{k}\right) \leq \max_{1\leq k\leq n}D(X_{k}).
\end{equation*}
\end{lemma}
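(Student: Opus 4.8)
The plan is to prove the two-variable inequality first and then derive the general statement by a straightforward induction. For the two-variable case, the natural strategy is to exploit the entropy power / convexity structure of relative entropy along independent sums. Write $\sigma_X^2 = \mathrm{Var}(X)$ and $\sigma_Y^2 = \mathrm{Var}(Y)$, so $\sigma_X^2 + \sigma_Y^2 = 1$, and recall that $D(\cdot)$ is the deficit $h(\text{Gaussian with matching variance}) - h(\cdot)$, and that $D$ is invariant under affine rescaling. Since $\mathrm{Var}(X+Y) = 1$, the Gaussian we compare $X+Y$ against has variance $1$; let $G_X, G_Y$ be independent Gaussians with variances $\sigma_X^2, \sigma_Y^2$, so that $G_X + G_Y$ is standard Gaussian. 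The key identity I would use is that $D(X+Y)$ can be written as a relative entropy $D(P_{X+Y} \parallel P_{G_X + G_Y})$, and then apply the data-processing / convexity inequality for relative entropy: since $P_{X+Y}$ is the pushforward of $P_X \otimes P_Y$ under addition and $P_{G_X+G_Y}$ is the pushforward of $P_{G_X} \otimes P_{G_Y}$, we get $D(P_{X+Y}\parallel P_{G_X+G_Y}) \le D(P_X \otimes P_Y \parallel P_{G_X} \otimes P_{G_Y}) = D(P_X \parallel P_{G_X}) + D(P_Y \parallel P_{G_Y})$.

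This gives $D(X+Y) \le D(P_X \parallel P_{G_X}) + D(P_Y \parallel P_{G_Y})$, but that is not yet the claimed bound — I still need the variance weights $\sigma_X^2, \sigma_Y^2$ in front. The point is that $D(P_X \parallel P_{G_X})$ is \emph{not} $D(X)$: here $G_X$ has variance $\sigma_X^2 \le 1$, whereas $D(X)$ compares against a Gaussian of variance $\sigma_X^2$ too — so actually $D(P_X \parallel P_{G_X}) = D(X)$ by definition, and similarly for $Y$. So the crude bound gives $D(X+Y) \le D(X) + D(Y)$, which is weaker than what we want when the variances are small. To get the sharp weighted bound I would instead use a scaling trick: apply the convexity inequality not to $X, Y$ themselves but after normalizing. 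Consider $\widetilde X = X/\sigma_X$ and $\widetilde Y = Y/\sigma_Y$, each of variance $1$, with $D(\widetilde X) = D(X)$, $D(\widetilde Y) = D(Y)$. Then $X + Y = \sigma_X \widetilde X + \sigma_Y \widetilde Y$ and this is a convex combination of unit-variance variables with weights $\sigma_X^2, \sigma_Y^2$ summing to $1$ (since $\sigma_X^2 + \sigma_Y^2 = 1$). The cleanest route is then to invoke the known variance-weighted subadditivity of the entropy deficit along convolutions — this is exactly the statement (see e.g. the Artstein--Ball--Barthe--Naor / Madiman--Barron line of results, or it can be obtained directly from the integral representation of $D$ via Fisher information and the Stam/Blachman inequality): for independent $U, V$ of variance $1$ and $t \in [0,1]$, $D(\sqrt t\, U + \sqrt{1-t}\, V) \le t\, D(U) + (1-t) D(V)$. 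Taking $t = \sigma_X^2$, $U = \widetilde X$, $V = \widetilde Y$ yields the claim.

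For the second assertion, induct on $n$. The base case is trivial. For the inductive step, set $S = \sum_{k=1}^{n} X_k$ and suppose the $X_k$ are not all of total variance $1$ — but $D$ is scale-invariant, so we may normalize $\mathrm{Var}(S) = 1$. Split $S = X_n + (X_1 + \dots + X_{n-1})$, apply the two-variable inequality with $X = X_n$, $Y = X_1 + \dots + X_{n-1}$ (rescaling so their variances sum to $1$), giving $D(S) \le \mathrm{Var}(X_n)\, D(X_n) + \mathrm{Var}(Y)\, D(Y)$ where here $\mathrm{Var}$ means the normalized variances summing to $1$. By the inductive hypothesis applied to the rescaled $Y$, $D(Y) \le \max_{1\le k \le n-1} D(X_k)$, and $D(X_n) \le \max_k D(X_k)$ trivially; since the two variance weights sum to $1$, the right-hand side is a convex combination of quantities each $\le \max_{1\le k\le n} D(X_k)$, hence is itself $\le \max_{1\le k \le n} D(X_k)$. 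This closes the induction.

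The main obstacle is pinning down the variance-weighted convexity inequality $D(\sqrt t\, U + \sqrt{1-t}\, V) \le t D(U) + (1-t) D(V)$ cleanly; the bare data-processing inequality only delivers the unweighted $D(U) + D(V)$, which is too lossy for the induction to go through (it would produce $\sum_k D(X_k)$ rather than $\max_k D(X_k)$). I expect to establish the weighted version either by citing the standard entropy-deficit subadditivity along convolutions or by a short self-contained argument: differentiate along the heat semigroup, use that the Fisher information deficit is superadditive in the Stam sense, and integrate — the de Bruijn identity converts the Fisher-information statement into the entropy statement. Everything else (scale-invariance of $D$, the reduction to total variance $1$, and the induction) is routine.
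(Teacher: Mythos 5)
Your proposal is correct in substance, and it identifies exactly the right crux: the plain data-processing bound $D(X+Y)\leq D(X)+D(Y)$ is too lossy, and what is really needed is the variance-weighted inequality $D(\sqrt{t}\,U+\sqrt{1-t}\,V)\leq t\,D(U)+(1-t)\,D(V)$ for unit-variance $U,V$, which is just the lemma restated after rescaling. The difference from the paper is in how that weighted inequality is obtained. You propose either to cite it as known (Lieb's linearized form of the entropy power inequality) or to derive it via Fisher-information superadditivity and de Bruijn's identity --- both legitimate, but the second is considerably heavier machinery than necessary, and the first leaves the crux as an external reference. The paper instead derives the weighted inequality in two lines from the classical entropy power inequality $2^{2h(X)}+2^{2h(Y)}\leq 2^{2h(X+Y)}$: writing $h(X)=h(Z_1)-D(X)$ with $Z_1$ Gaussian of variance $\mathrm{Var}(X)$ (so that $2^{2h(Z_1)}=2\pi e\,\mathrm{Var}(X)$), the EPI becomes
\begin{equation*}
\mathrm{Var}(X)\,2^{-2D(X)}+\mathrm{Var}(Y)\,2^{-2D(Y)}\leq 2^{-2D(X+Y)},
\end{equation*}
and Jensen's inequality for the convex function $x\mapsto 2^{-2x}$ with weights $\mathrm{Var}(X),\mathrm{Var}(Y)$ summing to $1$ gives precisely the claimed bound. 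This Jensen step is the standard passage from the power form of the EPI to Lieb's form, so the two proofs are really the same fact approached from opposite ends; what the paper's version buys is a short, self-contained argument needing only the EPI as quoted from Johnson's book. Your induction for the second assertion is fine and is essentially what the paper leaves implicit; the only caveat is the degenerate case where some $X_k$ lacks a density, in which case $D(X_k)=+\infty$ and the inequality holds trivially.
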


\begin{proof}
We apply the following inequality, see \cite[Theorem D.1]{Johnson}, 
\begin{equation}  \label{power_entropy}
2^{2h(X)}+2^{2h(Y)}\leq 2^{2h(X+Y)}.
\end{equation}%
According to \eqref{entropy_normal}, we have 
\begin{equation*}
D(X+Y)=h(Z)-h(X+Y)\quad \mathrm{and}\quad h(Z)=\log _{2}\sqrt{2\pi e}.
\end{equation*}%
Similarly, we write 
\begin{align*}
D(X) = h(Z_1)+ h(X)\quad \mathrm{and}\quad D(Y) = h(Z_2)+ h(Y),
\end{align*}
where $Z_{1}$ and $Z_{2}$ are normal random variables such that $\mathrm{Var}%
(Z_{1})=\mathrm{Var}(X)$ and $\mathrm{Var}(Z_{2})=\mathrm{Var}(Y)$. Using
this in \eqref{power_entropy} we get 
\begin{equation*}
2^{h(Z_{1})}\cdot 2^{-2D(X)}+2^{h(Z_{2})}\cdot 2^{-2D(Y)}\leq
2^{2h(Z_{1}+Z_{2})}\cdot 2^{-2D(X+Y)},
\end{equation*}%
whence 
\begin{equation*}
\mathrm{Var}(X)\cdot 2^{-2D(X)}+\mathrm{Var}(Y)\cdot 2^{-2D(Y)}\leq
2^{-2D(X+Y)}.
\end{equation*}%
Since $x\mapsto 2^{-2x}$ is convex, we get 
\begin{equation*}
2^{-2\left[ \mathrm{Var}(X)D(X)+\mathrm{Var}(Y)D(Y)\right] }\leq \mathrm{Var}%
(X)\cdot 2^{-2D(X)}+\mathrm{Var}(Y)\cdot 2^{-2D(Y)}.
\end{equation*}%
The result follows.
\end{proof}

\section{Central limit theorem}\label{sec: Normal}

The main result of the article is the following theorem.

\begin{theorem}
\label{Thm_approx} In notation of Section \ref{Prem_sec}, suppose that
condition \eqref{C-B condition11} holds with $\delta \geq 1$. Assume that
the random variable $\epsilon (B)$ has a bounded density with respect to the
Lebesgue measure. Denote $v_{N}=n_{1}n_{2}\cdot \cdot \cdot n_{N},$ clearly $%
v_{N}\geq 2^{N}$. For $O\in H_{N}$ the following inequality holds 
\begin{equation*}
\Vert P_{\Lambda _{O}}-\mathcal{N}\Vert _{TV}\leq 
\begin{cases}
C\,N^{-\frac{1}{2}} & \text{for $\delta =1,$} \\ 
C\,v_{N}^{-\frac{3}{2}\min \{\delta -1,\frac{1}{3}\}} & \text{for $\delta >1$%
},%
\end{cases}%
\end{equation*}%
for some $C>0$ and all $N\geq 1$. In particular, independently on the group
structure 
\begin{equation*}
\Vert P_{\Lambda _{O}}-\mathcal{N}\Vert _{TV}\leq 
\begin{cases}
C\,N^{-\frac{1}{2}} & \text{for $\delta =1,$} \\ 
C\,2^{-\frac{3}{2}N\min \{\delta -1,\frac{1}{3}\}} & \text{for $\delta >1$}.%
\end{cases}%
\end{equation*}
\end{theorem}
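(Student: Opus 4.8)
The plan is to reduce the statement about the averaged eigenvalues to the Berry--Esseen-type bounds of Theorem~\ref{THM_Berry-Essen}, applied to a well-chosen sum of independent random variables. The first step is to rewrite $\overline{\lambda}_O$ for $O\in H_N$ in a way that exposes independence. Each $B\in\mathcal{B}_0(O)$ carries the eigenvalue $\lambda(B,\omega)=\lambda_0\bigl(1+\sum_{k\ge0}a_k\varepsilon(B_k,\omega)\bigr)$, and the geodesics from distinct $B,B'\in\mathcal{B}_0(O)$ to $\varpi$ share a common tail (the part inside $O$ and above). Grouping the contributions of the $\varepsilon(D)$ by the horocycle $H_j$ to which $D$ belongs, one gets $\overline{\lambda}_O(\omega)-\lambda_0=\lambda_0\sum_{j\ge0}a_j\,\overline{\varepsilon}_j(\omega)$, where for $0\le j\le N$ the quantity $\overline{\varepsilon}_j$ is itself an average of $v_N/v_j$ i.i.d.\ copies of $\varepsilon$ (the balls of $H_j$ lying in $O$), and for $j>N$ it is a single $\varepsilon(D)$ shared by all $B\in\mathcal{B}_0(O)$. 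Crucially, the random variables $\{\overline{\varepsilon}_j\}_{j\ge0}$ are independent, since they depend on disjoint families of the $\varepsilon(D)$'s. Thus $\Lambda_O$ is, up to the normalization $\sigma(\overline{\lambda}_O)$, a sum of independent mean-zero summands $Y_j=\lambda_0 a_j\overline{\varepsilon}_j$.

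The second step is to control the variances and the higher moments. Writing $\sigma^2=\mathrm{Var}(\varepsilon)$, we have $\mathrm{Var}(Y_j)=\lambda_0^2a_j^2\sigma^2\cdot(v_j/v_N)$ for $j\le N$ and $=\lambda_0^2a_j^2\sigma^2$ for $j>N$, so $B_N:=\sum_j\mathrm{Var}(Y_j)=\lambda_0^2\sigma^2\bigl(\sum_{j\le N}a_j^2 v_j/v_N+\sum_{j>N}a_j^2\bigr)$. Here condition~\eqref{C-B condition11} enters: it forces $a_j=C_j/\lambda_0$ to decay (roughly) geometrically like $r_j^{-\delta/2}$, and since $m(B)\asymp r_j^{\dots}$ grows like $v_j$ one gets two-sided comparisons $a_j^2 v_j\asymp$ (a geometric-type sequence), which let us pin down the order of $B_N$. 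For the Lyapunov ratios, since $\overline{\varepsilon}_j$ is an average of $v_N/v_j$ bounded i.i.d.\ variables, $\mathbb{E}|\overline{\varepsilon}_j|^s\le C_s(v_j/v_N)^{s/2}$ for $s\in\{3,4\}$ by the Marcinkiewicz--Zygmund/Rosenthal inequality; hence $\mathbb{E}|Y_j|^s\le C\lambda_0^s a_j^s(v_j/v_N)^{s/2}$, and $\mathbb{L}_s=B_N^{-s/2}\sum_j\mathbb{E}|Y_j|^s$ becomes an explicit ratio of geometric-type sums that one estimates case by case. One also needs the hypothesis that $D(Y_j)\le A$ uniformly: for $j>N$ this is $D(\varepsilon)<\infty$, which holds because $\varepsilon$ has a bounded density on a bounded interval; for $j\le N$ it follows from Lemma~\ref{lemma_entropy} ($D$ of a sum of i.i.d.\ copies is $\le D(\varepsilon)$, up to the mean/variance invariance of $D$).

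The third step is the bookkeeping that produces the stated rates. For $\delta=1$ the sequence $a_j^2v_j$ is (comparable to) constant, so $B_N\asymp N/v_N$ for the first block while the tail $\sum_{j>N}a_j^2\asymp v_N^{-1}$ is of smaller or comparable order; the dominant summands in $\mathbb{L}_3$ are those with $j$ near $N$, and a short computation gives $\mathbb{L}_3\asymp N^{-1/2}$, so part~1 of Theorem~\ref{THM_Berry-Essen} yields $\|P_{\Lambda_O}-\mathcal{N}\|_{TV}\le CN^{-1/2}$. For $\delta>1$ one distinguishes whether $\delta-1\le\tfrac13$ or not: when $\delta-1<\tfrac13$ the relevant ratio is dominated by the tail term $j>N$ and one uses $\mathbb{L}_3$, obtaining the exponent $\tfrac32(\delta-1)$ in $v_N$; when $\delta-1\ge\tfrac13$ the summands decay fast enough that $\mathbb{L}_4$ is finite and comparable to $v_N^{-1/2}$, so part~2 gives $D(S_N)\le Cv_N^{-1/2}$, and Pinsker's inequality converts this to a TV bound with exponent $\tfrac32\cdot\tfrac13=\tfrac12$; taking the minimum exponent $\tfrac32\min\{\delta-1,\tfrac13\}$ covers both. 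Finally $v_N\ge 2^N$ gives the group-independent bound. The main obstacle I expect is the first step --- carefully verifying the independence and the i.i.d.\ structure of the $\overline{\varepsilon}_j$ from the tree/horocycle geometry, and getting the exact count $v_N/v_j$ of shared versus distinct perturbations --- together with making the geometric-sum estimates for $B_N$ and $\mathbb{L}_s$ uniform in $N$ using only the two-sided bound~\eqref{C-B condition11} rather than exact self-similarity.
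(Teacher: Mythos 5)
Your reduction to Theorem~\ref{THM_Berry-Essen} and your treatment of the case $\delta=1$ (group the perturbations by horocycle, bound $D(Y_j)$ via Lemma~\ref{lemma_entropy}, bound $\mathbb{E}|Y_j|^s$ via Marcinkiewicz--Zygmund) is exactly the paper's argument and gives $\mathbb{L}_3\asymp N^{-1/2}$. The problem is the case $\delta>1$, where the same horocycle-grouped decomposition $\Lambda_O=B_N^{-1/2}\sum_j Y_j$ with $Y_j=\lambda_0 a_j\overline{\varepsilon}_j$ cannot work. For $\delta>1$ one has $\mathrm{Var}(Y_j)\asymp a_j^2 v_j/v_N\asymp v_j^{1-\delta}v_N^{-1}$, so the single block $j=0$ carries a fraction of the total variance bounded away from $0$: $\mathrm{Var}(Y_0)/B_N\geq c>0$ uniformly in $N$. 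Since $\mathbb{E}|Y_0|^3\geq(\mathrm{Var}\,Y_0)^{3/2}$ by Lyapunov's moment inequality, this forces $\mathbb{L}_3\geq\bigl(\mathrm{Var}(Y_0)/B_N\bigr)^{3/2}\geq c'>0$ for every $N$, so Theorem~\ref{THM_Berry-Essen} applied to your summands returns only a trivial bound. Your claim that for $\delta-1<\tfrac13$ ``the relevant ratio is dominated by the tail term $j>N$'' is therefore false: the ratio is dominated by the $j=0$ block and does not decay. The paper avoids this by using a strictly finer decomposition when $\delta>1$: each individual $\varepsilon(B)$ with $B$ in the horocycles $H_0,\dots,H_N$ inside $O$ is kept as a separate summand $X_B=\frac{a_k}{n_{k+1}\cdots n_N}\varepsilon(B)$ (about $v_N$ of them in $H_0$ alone), plus one tail block $X_N=\sum_{k>N}a_k\varepsilon(O_k)$; only then does the Lyapunov ratio come out as $v_N^{-\frac32\min\{\delta-1,\frac13\}}$. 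This finer decomposition creates a new obligation your sketch does not address: the uniform bound $D(X_N)\leq A$ for the tail block, which the paper proves by a Plancherel/characteristic-function argument using the bounded density of $\varepsilon$ and the geometric decay $\sum_{k>N}a_k\leq C a_{N+1}$ coming from \eqref{C-B condition11}.

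Two further points. First, your route for $\delta-1\geq\tfrac13$ via $\mathbb{L}_4$ and Pinsker does not give the stated rate: from $D(S_N)\leq C v_N^{-1/2}$ Pinsker yields only $\Vert\cdot\Vert_{TV}\leq C v_N^{-1/4}$, not $v_N^{-1/2}$. The paper obtains $v_N^{-1/2}$ directly from part~1 of Theorem~\ref{THM_Berry-Essen}, since with the fine decomposition $\mathbb{L}_3\asymp v_N^{-1/2}$ once $3-1-\tfrac{3\delta}{2}<0$. Second, keeping the tail as infinitely many separate summands $a_j\varepsilon(O_j)$, $j>N$, means applying a theorem stated for finite sums to an infinite one; this is fixable by a limiting argument (or by grouping the tail as the paper does), but it should be said.
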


\begin{proof}
For $O\in H_{N}$ we write $\overline{\lambda }_{N}(\omega )$ for $\overline{%
\lambda }_{O}(\omega )$ and $\Lambda _{N}(\omega )$ for $\Lambda _{O}(\omega )$%
, then 
\begin{align}
\overline{\lambda }_{N}(\omega )&=\lambda _{0}\left( 1+\overline{U}
_{N}(\omega )\right) ,  \notag \\
\overline{U}_{N}(\omega )&=\frac{1}{v_{N}}\sum\limits_{B\in \mathcal{B}%
_0(O)}U(B,\omega ) ,  \label{U-bar} \\
\Lambda _{N}(\omega )&=\frac{\overline{U}_{N}(\omega )}{\sigma \left(
\overline{U}_{N}\right) }. \notag
\end{align}
To estimate the quantity $\Vert P_{\Lambda _{O}}-\mathcal{N}\Vert _{TV}$ we
apply Theorem \ref{THM_Berry-Essen}. We distinguish two cases \textbf{$%
(\delta >1)$ }and \textbf{$(\delta =1).$}\newline
\textbf{The case $(\delta >1)$}: Let $\{O_{k}\}_{k\geq N}$ be the infinite
geodesic path from $O$ to $\varpi $. Similarly, for each $B\in H_0$ we pick
the infinite geodesic path $\{B_{k}\}_{k\geq 0}$ from $B$ to $\varpi $.
Applying $(\ref{U-epsilon})$ to equation $(\ref{U-bar})$ we obtain%
\begin{eqnarray*}
\overline{U}_{N} &=&\frac{1}{n_{1}...n_{N}}\sum\limits_{B\in H_0:B\subset
O}\sum\limits_{k\geq 0}a_{k}\varepsilon (B_{k})=\frac{1}{n_{1}...n_{N}}%
\sum\limits_{k\geq 0}a_{k}\sum\limits_{B\in H_0:B\subset O}\varepsilon
(B_{k}) \\
&=&\frac{1}{n_{1}...n_{N}}\left( a_{0}\sum\limits_{B_{0}\in
H_{0}:B_{0}\subseteq O}\varepsilon (B_{0})+a_{1}n_{1}\sum\limits_{B_{1}\in
H_{1}:B_{1}\subseteq O}\varepsilon (B_{1})\right. \\
&&\left. +a_{2}n_{1}n_{2}\sum\limits_{B_{2}\in H_{2}:B_{2}\subseteq
O}\varepsilon (B_{2})+...+a_{N}n_{1}n_{2}...n_{N}\varepsilon (O_{N})\right)
\\
&&\quad +a_{N+1}\varepsilon (O_{N+1})+a_{N+2}\varepsilon (O_{N+2})+...\text{ 
}.
\end{eqnarray*}%
Let us introduce two random variables 
\begin{align}
\mathcal{U}_{N}=\frac{a_{0}}{n_{1}...n_{N}}\sum\limits_{B_{0}\in
H_{0}:B_{0}\subseteq O}\varepsilon (B_{0})+\frac{a_{1}}{n_{2}...n_{N}}%
\sum\limits_{B_{1}\in H_{1}:B_{1}\subseteq O}\varepsilon
(B_{1})+\ldots
 + a_{N}\varepsilon (O_{N})  \label{U-L equation}
\end{align}
and%
\begin{equation}\label{V-L equation}
X_{N}=a_{N+1}\varepsilon (O_{N+1})+a_{N+2}\varepsilon (O_{N+2})+...\text{ }.
\end{equation}%
Random variables $\mathcal{U}_{N}$ and $X_{N}$ are independent, have zero
mean and%
\begin{equation*}
\overline{U}_{N}=\mathcal{U}_{N}+X_{N}.  
\end{equation*}%
Let us denote by $\mathcal{B}_{k}(O)$ the set of all balls $B\subset O$
which belong to the horocycle $H_{k}$, and let $\mathcal{B}^{\ast
}(O)=\bigcup_{k=0}^{N}\mathcal{B}_{k}(O)$. Then 
\begin{equation*}
\Lambda _{N}=\frac{\sum_{B\in \mathcal{B}^{\ast }(O)}X_{B}+X_{N}}{B_{N}^{1/2}%
},
\end{equation*}%
where 
\begin{equation*}
X_{B}=\frac{a_{k}}{n_{k+1}...n_{N}}\varepsilon (B),\text{ for }B\in \mathcal{%
B}_{k}(O),
\end{equation*}%
and 
\begin{equation*}
B_{N}=\sum_{B\in \mathcal{B}^{\ast }(O)}\sigma ^{2}\left( X_{B}\right) +\sigma
^{2}\left( X_{N}\right) .
\end{equation*}

As $\epsilon (B)$ are bounded i.i.d. having bounded density, $D(X_{B}) =
D(\epsilon (B))<\infty$ for all balls $B\in \mathcal{B}^{\ast }(O)$. We
claim that, for some $A>0$, 
\begin{equation*}
D(X_{N})\leq A,\quad \mathrm{for\ all}\ N.
\end{equation*}%
Indeed, let $\widetilde{X_{N}}=X_{N}/\sqrt{\sigma (X_{N})}$. Then $%
D\left( X_{N}\right) =D\big( \widetilde{X_{N}}\big)$. Random variable $\widetilde{X_{N}}$ is
bounded because, by \eqref{C-B condition11}, 
\begin{equation*}
\left\vert \widetilde{X_{N}}\right\vert \leq \frac{\sum_{k>N}a_{k}}{\sqrt{\sum_{k>N}a_{k}^{2}}}%
\leq C \frac{a_{N+1}}{\sqrt{a_{N+1}^{2}}}=C<\infty,
\end{equation*}%
for some $C>0$. It follows that the density $\widetilde{p_{N}}$ of $%
\widetilde{X_{N}}$ is continuous and compactly supported function, whence
applying \eqref{Entropy_density} we obtain \footnote{$\Vert f\Vert_2$ is the norm in the space $L^{2}(\mathbb{R})$ of square integrable functions w.r.t. the Lebesgue measure.}
\begin{equation*}
D\big(\widetilde{X_{N}}\big)=D\big(\widetilde{X_{N}}\!\parallel \!Z\big) \leq e^{-\frac{C^{2}%
}{2}}+\sqrt{2\pi }e^{\frac{C^{2}}{2}}\Vert \widetilde{p_{N}}-\phi \Vert
_{2}^{2},
\end{equation*}%
where $\phi$ is the standard normal density. 
Let $\Phi _{\widetilde{X_{N}}}$ and $\Phi $ be the characteristic functions
of random variables $\widetilde{X_{N}}$ and $Z$ respectively. The Plancherel
formula yields 
\begin{equation*}
\Vert \widetilde{p_{N}}-\phi \Vert _{2}^{2}=\frac{%
1}{2\pi }\Vert \Phi _{\widetilde{X_{N}}}-\Phi \Vert _{2}^{2}\leq \frac{1}{\pi }\left( \Vert \Phi _{\widetilde{X_{N}}}\Vert
_{2}^{2}+\Vert \Phi \Vert _{2}^{2}\right) .
\end{equation*}%
Let $\Phi _{\varepsilon }$ be the characteristic function of $\varepsilon
(B) $. We have 
\begin{equation*}
|\Phi _{\widetilde{X_{N}}}(\xi )|=\prod_{k=N+1}^{\infty }\Big\vert\Phi
_{\varepsilon }\left( \frac{a_{k}}{\sqrt{\sigma (X_{N})}}\xi \right) \!%
\Big\vert\leq \Big\vert\Phi _{\varepsilon }\left( \frac{a_{N+1}}{\sqrt{%
\sigma (X_{N})}}\xi \right) \!\Big\vert,
\end{equation*}%
whence 
\begin{align*}
\Vert \Phi _{\widetilde{X_{N}}}\Vert _{2}^{2}& \leq \int %
\Big\vert\Phi _{\varepsilon }\left( \frac{a_{N+1}}{\sqrt{\sigma (X_{N})}%
}\xi \right) \Big\vert^{2}\text{d}\xi 
 =\frac{\sqrt{\sigma(X_{N})}}{a_{N+1}}\Vert \Phi _{\varepsilon }\Vert
_{2}^{2}=\frac{\sqrt{\sum_{k>N}a_{k}^{2}}}{a_{N+1}}\Vert
\Phi _{\varepsilon }\Vert _{2}^{2} \\
&\leq C^{\prime} \Vert \Phi _{\varepsilon }\Vert _{2}^{2},
\end{align*}%
where $C^{\prime}$ does not depend on $N$.

Next we want to estimate the Lyapunov ratio 
\begin{equation*}
\mathbb{L}_{s}=\frac{\sum_{B\in \mathcal{B}(O)}\mathbb{E}|X_{B}|^{s}+\mathbb{%
E}|X_{N}|^{s}}{B_{N}^{s/2}}.
\end{equation*}%
Since $\mathcal{U}_{N}$ and $X_{N}$ are independent, we have
\begin{equation*}
B_{N}=\sigma \left( \overline{U}_{N}\right) ^{2}=\sigma \left( \mathcal{U}%
_{N}\right)^{2}+\sigma \left( X_{N}\right) ^{2}.
\end{equation*}
By \cite[Claim 1]{BGMS}, \footnote{$%
f(x)\asymp g(x)$ means that there are some constants $c,C>0$ such that $f(x)\leq cg(x)$
and $g(x)\leq Cf(x)$.} 
\begin{equation*}
\sigma \left( X_{N}\right) ^{2}\asymp \left( n_{0}\cdots n_{N}n_{N+1}\right)
^{-\delta }
\end{equation*}%
and 
\begin{equation}
\sigma \left( \mathcal{U}_{N}\right) ^{2}\asymp \left\{ 
\begin{array}{ccc}
\left( n_{0}\cdots n_{N}\right) ^{-1} & \text{if} & \delta >1, \\ 
N\cdot \left( n_{0}\cdots n_{N}\right) ^{-1} & \text{if} & \delta =1, \\ 
\left( n_{0}\cdots n_{N}\right) ^{-\delta } & \text{if} & \delta <1,%
\end{array}%
\right.  \label{U-L comparisong}
\end{equation}%
for $N$ large enough. Hence, for $\delta \geq 1$ and $N$ big enough, 
\begin{equation}
B_{N}\asymp \sigma \left( \mathcal{U}_{N}\right) ^{2}.  \label{B_N}
\end{equation}%
Further we have 
\begin{align*}
\sum_{B\in \mathcal{B}_{0}(O)}\mathbb{E}|X_{B}|^{s}+\mathbb{E}|X_{N}|^{s}&
=\left( \frac{a_{0}}{n_{1}\cdots n_{N}}\right) ^{s}\sum\limits_{B_{0}\in
H_{0}:B_{0}\subset O}\!\!\!\! \mathbb{E}|\varepsilon (B_{0})|^{s} 
+\left( \frac{a_{1}}{n_{2}\cdots n_{N}}\right)
^{s}\!\!\!\!\sum\limits_{B_{1}\in H_{1}:B_{1}\subset O}\!\!\!\! \mathbb{E}|\varepsilon
(B_{1})|^{s} \\
& \qquad +\ldots +a_{N}^{s}\mathbb{E}|\varepsilon (O_{N})|^{s}+\mathbb{E}%
\Big\vert\sum_{k=N+1}^{\infty }a_{k}\varepsilon (O_{k})\Big\vert^{s} \\
& \asymp \frac{1}{v_{N}^{s-1}}\left( 1+v_{1}^{s-1-s\delta /2}+\ldots
+v_{N}^{s-1-s\delta /2}\right)
\end{align*}%
whence 
\begin{equation*}
\sum_{B\in \mathcal{B}_{0}(O)}\mathbb{E}|X_{B}|^{s}+\mathbb{E}%
|X_{N}|^{s}\asymp 
\begin{cases}
\frac{1}{v_{N}^{s-1}} & \text{if $s-1-s\delta /2<0$,} \\ 
\frac{1}{v_{N}^{s/2-1}} & \text{if $s-1-s\delta /2\geq 0$}.%
\end{cases}%
\end{equation*}%
Combining this with \eqref{B_N} we obtain 
\begin{equation*}
\mathbb{L}_{s}\asymp 
\begin{cases}
\frac{1}{v_{N}^{s/2-1}} & \text{if $s-1-s\delta /2<0$,} \\ 
\frac{1}{v_{N}^{s/2(\delta -1)}} & \text{if $s-1-s\delta /2\geq 0$.}%
\end{cases}%
\end{equation*}%
Thus, for $s=3$ and $\delta >1$, we get the desired result.\newline
\textbf{The case} $(\delta =1)$: Let us introduce auxiliary notation 
\begin{align*}
Y_{0}& =\frac{a_{0}}{n_{1}...n_{N}}\sum\limits_{B\in H_{0}:B\subseteq
O}\varepsilon (B), \\
Y_{1}& =\frac{a_{1}}{n_{2}...n_{N}}\sum\limits_{B\in H_{1}:B\subseteq
O}\varepsilon (B), \\
& \ \ldots \  \\
Y_{N}& =a_{N}\varepsilon (O_{N})\quad \mathrm{and}\quad Y_{N+1}=X_{N}.
\end{align*}%
Clearly, each of $Y_{k}$ has mean zero and 
\begin{equation*}
\Lambda _{N}=\frac{Y_{0}+Y_{1}+\ldots +Y_{N+1}}{B_{N+1}^{1/2}},\quad
B_{N+1}=\sum_{k=0}^{N+1}\sigma \left( Y_{k}\right) ^{2}.
\end{equation*}%
We again apply Theorem \ref{THM_Berry-Essen} because, by Lemma \ref%
{lemma_entropy}, 
\begin{equation*}
D(Y_{k})=D\left( \sum\limits_{B\in H_{k}:B\subseteq O}\varepsilon (B)\right)
\leq D(\epsilon (B))<\infty ,\quad 0\leq k\leq N.
\end{equation*}%

To estimate the Lyapunov ratios we use the Marcinkiewicz-Zygmund inequality 
\cite[Chapter VII, \S 3]{Shiryaev}: for independent random variables $\xi
_{1},\xi _{2},\ldots ,\xi _{n}$ with mean zero and for all $p>1$ there is a
constant $C(p)>0$ such that 
\begin{equation*}
\mathbb{E}\Big\vert\sum_{k=1}^{n}\xi _{k}\Big\vert^{p}\leq C(p)\,\mathbb{E}%
\left( \sum_{k=1}^{n}\xi _{k}^{2}\right) ^{p/2}.
\end{equation*}%
Setting 
\begin{equation*}
\xi _{k}=\sum_{B\in H_{k}:B\subset O}\varepsilon (B),
\end{equation*}%
the Marcinkiewicz-Zygmund inequality yields 
\begin{equation*}
\mathbb{E}|Y_{k}|^{s}=\left( \frac{a_{k}}{n_{k+1}\cdots n_{N}}\right) ^{s}%
\mathbb{E}|\xi _{k}|^{s}\leq C(s)\mathrm{Var}(\epsilon (B))\frac{a_{k}^{s}}{%
(n_{k+1}\cdots n_{N})^{s/2}}.
\end{equation*}%
It follows, see \eqref{C-B condition11}, that 
\begin{align*}
\sum_{k=0}^{N}\mathbb{E}|Y_{k}|^{s}& \leq C_1(s) \left( \frac{a_{0}^{s}}{%
(n_{1}\cdots n_{N})^{s/2}}+\frac{a_{1}^{s}}{(n_{2}\cdots n_{N})^{s/2}}%
+\ldots +a_{N}^{s}\right) \\
& = \frac{C_1(s)}{v_{N}^{s/2}}\left( a_0^s+a_1^sn_{1}^{s/2}+\ldots
+a_N^sv_{N}^{s/2}\right) \leq C_2(s) \frac{N}{v_{N}^{s/2}}.
\end{align*}%
Hence, using \eqref{B_N} and \eqref{U-L comparisong} with $\delta =1$, we
finally get 
\begin{equation*}
\mathbb{L}_s\leq \frac{C}{N^{s/2-1}}.
\end{equation*}%
By Theorem \ref{THM_Berry-Essen}, the result follows.
\end{proof}

\begin{remark}
Applying similar reasoning we estimate the relative entropy distance. For $%
O\in H_N$, 
\begin{align*}
D(\Lambda_O \!\parallel \! \mathcal{N}) \leq 
\begin{cases}
C\, N^{-1} & \text{for $\delta =1,$} \\ 
C\, v_N^{-2\min\{(\delta -1),\frac{1}{2} \}} & \text{for $\delta >1$},%
\end{cases}%
\end{align*}
for some $C>0$ and all $N$. 
\end{remark}

\section{An example}

As an example we consider the space $X=\mathbb{Q}_{p}$ equipped with its
standard ultrametric $\left\vert x-y\right\vert _{p}$ and its normalized Haar
measure. Let $\mathcal{D}^{\alpha }$, $\alpha >0$, be a homogeneous
hierarchical Laplacian uniquely defined by its eigenvalues 
\begin{equation*}
\lambda {^{\alpha }}(B)=\left( \frac{p}{\mathrm{diam}(B)}\right) ^{\alpha },%
\text{ \ }B\in \mathcal{B}.
\end{equation*}%
Let $\mathcal{D}^{\alpha }\left( \omega \right) $ be its random perturbation
by i.i.d. $\{\varepsilon (B)\}_{B\in \mathcal{B}}$ as defined in Section \ref{Prem_sec}.
As in the previous section we assume that $\varepsilon (B)$ admits a bounded density.
We notice that $\mathcal{D}^{\alpha }$ satisfies condition (\ref{lambda-B
condition}) with $\delta =2\alpha .$ In particular, for any $\alpha \geq 1/2$
the normalized arithmetic means $\Lambda _{O}^{\alpha }\left( \omega \right) 
$ converge as $O\rightarrow \varpi $ to the standard normal random variable $%
Z$ in the sense of the total variation distance. In this section we study
convergence assuming that $0<\alpha <1/2.$

\begin{theorem}
\label{Non-normal}For any $0<\alpha <1/2$ there exists a non-gaussian random variable $%
\Lambda ^{\alpha }$ such that the normalized arithmetic means $\Lambda _{O}^{\alpha }$ converge to $\Lambda
^{\alpha }$ in the sense of the total variation distance. 
\end{theorem}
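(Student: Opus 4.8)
The plan is to adapt the decomposition used in the proof of Theorem \ref{Thm_approx} to the regime $0<\alpha<1/2$, i.e. $\delta=2\alpha<1$. As before, for $O\in H_N$ we write $\overline U_N=\mathcal U_N+X_N$, where $X_N=\sum_{k>N}a_k\varepsilon(O_k)$ collects the contributions from balls above the level $N$ and $\mathcal U_N$ collects the contributions from balls contained in $O$. The key structural observation is that for $\delta<1$ the series defining the limit is dominated by the \emph{top} scales rather than spread out evenly: by \eqref{U-L comparisong}, $\sigma(\mathcal U_N)^2\asymp(n_0\cdots n_N)^{-\delta}$, which is of the same order as $\sigma(X_N)^2\asymp(n_0\cdots n_N n_{N+1})^{-\delta}$. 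Hence, unlike the case $\delta\ge1$, the normalization $B_N=\sigma(\overline U_N)^2$ does \emph{not} concentrate on $\mathcal U_N$; both pieces carry a positive fraction of the variance even in the limit. This is exactly why a non-Gaussian limit appears.

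Concretely, I would first show that $\Lambda_O^\alpha$ can be written, up to a vanishing-in-total-variation error, as a finite (bounded in $N$) sum of independent normalized blocks, where block $k$ is $a_k(n_{k+1}\cdots n_N)^{-1}\sum_{B\in H_k, B\subseteq O}\varepsilon(B)$ divided by $\sqrt{B_N}$, plus the normalized $X_N$ term. For $\delta<1$ the variances of these blocks, read off from \eqref{U-L comparisong} and \cite[Claim 1]{BGMS}, form a geometric-type sequence $\asymp v_k^{-\delta}/v_N^{-\delta}=(n_{k+1}\cdots n_N)^{-\delta}$ in $k$ (after normalization), so only $O(1)$ of the blocks matter and the tail blocks contribute negligible total variation by the Berry–Esseen/entropy machinery (Theorem \ref{THM_Berry-Essen} and Lemma \ref{lemma_entropy}). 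Simultaneously, within each retained block, the inner sum $\sum_{B\in H_k,B\subseteq O}\varepsilon(B)$ is a sum of $v_N/v_k\to\infty$ i.i.d. bounded variables with bounded density, so by part 1 of Theorem \ref{THM_Berry-Essen} it is close in total variation to a Gaussian. Replacing each retained inner sum by the corresponding Gaussian (and the normalized $X_N$ term by its own limit, which exists because the $a_k$ decay geometrically) yields a candidate limit $\Lambda^\alpha$ equal to a convergent series $\sum_{j\ge0}c_j Z_j + \widetilde X_\infty$ of independent terms, where the $Z_j$ are standard normal but the weights $c_j$ do not all vanish and $\widetilde X_\infty$ is the (non-Gaussian) limit of the renormalized $X_N$; one then checks directly that this mixture is not Gaussian — e.g. its density is not the Gaussian density, or more robustly the fourth cumulant or the characteristic function $\prod_j\Phi(c_j\xi)\cdot\Phi_{\widetilde X_\infty}(\xi)$ is not that of any single Gaussian because the $\varepsilon$-factor characteristic function $\Phi_\varepsilon$ is not Gaussian and appears un-smoothed in the limit of $\Phi_{X_N}$ (the bound $|\Phi_{\widetilde{X_N}}(\xi)|\le|\Phi_\varepsilon(\text{const}\cdot\xi)|$ already shows the relevant factor persists).

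Finally, convergence in total variation of $\Lambda_O^\alpha$ to $\Lambda^\alpha$ is assembled by a triangle inequality: the error from truncating to $O(1)$ blocks is controlled by entropy (Lemma \ref{lemma_entropy}, Pinsker) uniformly in $N$ and tends to $0$ as the truncation level grows; the error from Gaussianizing each retained inner sum is $O(\mathbb L_3)=O(v_k^{-1/2})\to0$ by Theorem \ref{THM_Berry-Essen}(1); and the error from replacing the renormalized $X_N$ by $\widetilde X_\infty$ is handled by an explicit characteristic-function/$L^2$-density estimate as in the proof of Theorem \ref{Thm_approx}, since $\widetilde{X_N}$ has uniformly bounded continuous compactly supported densities with $\|\Phi_{\widetilde{X_N}}\|_2$ uniformly bounded, so the densities converge in $L^2$ and hence in $L^1$. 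The main obstacle I anticipate is making the last point rigorous: one needs the renormalized tail variable $\widetilde{X_N}$ to converge in total variation (equivalently, its density in $L^1$) to a genuine limiting density, which requires a bit of care because the normalizing constants $\sqrt{\sigma(X_N)}$ and the ratios $a_k/a_{N+1}$ must be shown to stabilize; this hinges on the sharp two-sided bounds \eqref{C-B condition11}–\eqref{lambda-B condition} forcing $a_{k+1}/a_k$ to be comparable to $n_{k+1}^{-\delta}$, so that after rescaling the sequence of tail distributions is tight with a unique (non-Gaussian) limit.
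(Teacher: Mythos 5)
There is a genuine gap at the heart of your argument, and it concerns exactly the mechanism that makes the theorem true. You correctly observe that for $\delta=2\alpha<1$ the variance is carried by the top scales: the normalized variance of the $k$-th block $\frac{a_k}{n_{k+1}\cdots n_N}\sum_{B\in H_k,\,B\subseteq O}\varepsilon(B)$ is of order $(v_k/v_N)^{1-\delta}$ (not $(v_k/v_N)^{\delta}$ as you wrote, but either way small only when $k$ is far below $N$), so the blocks that matter are those with $k=N-j$, $j=O(1)$. But for precisely those blocks the inner sum runs over $v_N/v_k=n_{k+1}\cdots n_N=p^{j}$ balls, a number that stays \emph{bounded} as $N\to\infty$ (for $k=N$ it is the single variable $\varepsilon(O_N)$). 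Your claim that within each retained block the inner sum is a sum of $v_N/v_k\to\infty$ i.i.d. variables and hence close in total variation to a Gaussian is therefore false for every retained block; the blocks that do Gaussianize (fixed $k$, so $v_N/v_k\to\infty$) are exactly those whose share of the variance tends to $0$. Correspondingly, the error you quote for Gaussianizing a retained block, $O(\mathbb{L}_3)=O(v_k^{-1/2})$, is not the Lyapunov ratio of that inner sum: the correct ratio is $\asymp(v_k/v_N)^{1/2}=p^{-j/2}$, which does not tend to $0$ for fixed $j$. As a result your candidate limit $\sum_j c_j Z_j+\widetilde X_\infty$ with Gaussian $Z_j$ is not the limit of $\Lambda_O^\alpha$. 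The paper's proof keeps the dominant blocks as normalized sums of \emph{finitely many} i.i.d.\ copies of $\varepsilon$: reindexing by the distance from the top, $\mathcal U_N/\sigma(\mathcal U_N)$ converges to $U=\sqrt{1-p^{2\alpha-1}}\sum_{k\ge0}p^{(2\alpha-1)k/2}S_k/\sigma(S_k)$ with $S_k$ a sum of $\sim p^k$ i.i.d.\ variables, and non-Gaussianity of $U$, of the tail limit, and of $\Lambda$ follows from Cram\'er's theorem rather than from your heuristic that $\Phi_\varepsilon$ ``persists un-smoothed''.

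Your route to upgrading convergence in law to total variation is also more involved than necessary, and its weakest point (stabilization of the normalizing constants) is real: the paper's Theorem \ref{Non-normal} is stated for $X=\mathbb{Q}_p$ with $\lambda^{\alpha}(B)=(p/\mathrm{diam}(B))^{\alpha}$, where $a_k=(p^{\alpha}-1)p^{-\alpha(k+1)}$ exactly, so that $\sigma(\mathcal U_N)^2$, $\sigma(X_N)^2$ and the ratios $a_k/a_{N+1}$ have genuine limits; under only the two-sided bounds \eqref{C-B condition11} these ratios need not converge and a limit law need not exist. Granting the exact $p$-adic setting, the paper concludes in one stroke: the factorization of $\Phi_{\Lambda_N}$ over the independent $\varepsilon(B)$ gives the $N$-independent bound $\vert\Phi_{\Lambda_N}(A_N\xi)\vert\le\vert\Phi_{\varepsilon}(\xi)\vert\,\vert\Phi_{\varepsilon}(p^{-\alpha}\xi)\vert$, which is integrable because $\varepsilon$ has a bounded density (so $\Phi_\varepsilon\in L^2$); combined with pointwise convergence of characteristic functions this yields $L^1$-convergence of $\Phi_{\Lambda_N}$, hence pointwise convergence of the densities by Fourier inversion, and Scheff\'e's lemma gives convergence in total variation. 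If you repair the identification of the limit as above, this dominated-convergence argument is a much shorter replacement for your block-by-block triangle inequality.
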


\begin{proof}
Following line-by-line the proof of the Theorem \ref{Thm_approx} we write 
\begin{equation*}
\sigma \left( \overline{U}_{N}\right) ^{2}=\sigma \left( \mathcal{U}%
_{N}\right) ^{2}+\sigma \left( X_{N}\right) ^{2}.
\end{equation*}%
As $\lambda _{k}=p^{-\alpha (k-1)}$ and $c_{k}=\left( p^{\alpha }-1\right)
p^{-\alpha k}$, we get
\begin{equation*}
a_{k}=c_{k}/\lambda _{0}=\left( p^{\alpha }-1\right) p^{-\alpha (k+1)},\text{
}k\geq 0.
\end{equation*}%
Using the above data and setting $\sigma^2 = \mathrm{Var} \left( \varepsilon \right)$ we estimate $\sigma \left( \mathcal{U}_{N}\right)$ and $%
\sigma \left( X_{N}\right)$ at $\infty $ as follows
\begin{eqnarray}
\sigma \left( \mathcal{U}_{N}\right) ^{2} &=&\sigma ^{2}\left( \frac{a_{0}^{2}}{%
p^{N}}+\frac{a_{1}^{2}}{p^{N-1}}+...+a_{N}^{2}\right) 
= \sigma ^{2}\left( p^{\alpha }-1\right) ^{2}\sum\limits_{0\leq l\leq
N}p^{-N+l-2\alpha (l+1)}  \notag \\
&\sim &\frac{\sigma ^{2}\left( p^{\alpha }-1\right) ^{2}}{1-p^{2\alpha -1}}%
p^{-2\alpha \left( N-1\right) }=\frac{\sigma ^{2}}{1-p^{2\alpha -1}}a_{N}^{2}
 \label{ass-sigma-U}
\end{eqnarray}%
and%
\begin{eqnarray}
\sigma \left( X_{N}\right) ^{2} &=&\sigma ^{2}\left(
a_{N+1}^{2}+a_{N+2}^{2}+...\right) 
=\sigma ^{2}\left( p^{\alpha }-1\right) ^{2}\sum\limits_{l\geq
N+1}p^{-2\alpha (l+1)}   \notag \\
&\sim &\frac{\sigma ^{2}\left( p^{\alpha }-1\right) ^{2}}{1-p^{-2\alpha }}%
p^{-2\alpha (N+2)}=\frac{\sigma ^{2}}{1-p^{-2\alpha }}a_{N+1}^{2}.  \label{ass-sigma-V}
\end{eqnarray}%
Let $\{\varepsilon _{i}\}_{i\geq 0}$ be i.i.d. random variables independent
of $\{\varepsilon (B)\}_{B\in \mathcal{B}}$ and having the same common
distribution as $\{\varepsilon (B)\}_{B\in \mathcal{B}}.$ By (\ref{V-L equation})
and (\ref{ass-sigma-V}) the random variable $X%
_{N}/\sigma \left( X_{N}\right)$ converges in law to the random variable%
\begin{equation*}
X\mathfrak{=}\sqrt{1-p^{-2\alpha }}\left( \frac{\varepsilon _{0}}{\sigma %
\left( \varepsilon _{0}\right) }+p^{-\alpha }\frac{\varepsilon _{1}}{\sigma %
\left( \varepsilon _{2}\right) }+...+p^{-k\alpha }\frac{\varepsilon _{k}}{%
\sigma \left( \varepsilon _{k}\right) }+...\right) .
\end{equation*}%

Let $\{\varepsilon _{ij}\}_{i,j\geq 0}$ be i.i.d. random variables
independent of both $\{\varepsilon _{i}\}_{i\geq 0}$ and $\{\varepsilon
(B)\}_{B\in \mathcal{B}}$ and having the same common distribution as $%
\{\varepsilon (B)\}_{B\in \mathcal{B}}.$ Define the random variables%
\begin{equation*}
S_{k}=\sum\limits_{0\leq j\leq p^{k}}\varepsilon _{kj},\text{ }k=0,1,2,...%
\text{ .}
\end{equation*}%
By (\ref{U-L equation}) and (\ref{ass-sigma-U}) the random variable $%
\mathcal{U}_{N}/\sigma \left( \mathcal{U}_{N}\right)$ converges in law to the
random variable%
\begin{equation*}
U=\sqrt{1-p^{2\alpha -1}}\sum\limits_{k\geq 0}p^{\left( 2\alpha -1\right)
k/2}\frac{S_{k}}{\sigma \left(S_{k}\right) }.
\end{equation*}%
Finally, the random variable%
\begin{equation*}
\Lambda _{N}=\frac{\overline{U}_{N}}{\sigma \left( \overline{U}_{N}\right) }=%
\frac{\sigma \left( \mathcal{U}_{N}\right)}{\sigma \left( \overline{U}_{N}\right) 
}\frac{\mathcal{U}_{N}}{\sigma \left( \mathcal{U}_{N}\right) }+\frac{\sigma
\left( X_{N}\right)}{\sigma \left( \overline{U}_{N}\right) }\frac{%
X_{N}}{\sigma \left( X_{N}\right)}
\end{equation*}%
converges in law to the random variable%
\begin{equation*}
\Lambda =\sqrt{\frac{1-p^{-2\alpha }}{1-p^{-1}}}U+\sqrt{\frac{p^{-2\alpha
}-p^{-1}}{1-p^{-1}}}V\mathfrak{.}
\end{equation*}%
By Cram\'{e}r's theorem $U$ and $V$, and therefore $\Lambda $ are not Gaussian. 

We claim that $\Lambda_N$ converges to $\Lambda$ in the total variation distance. 
To prove the claim we work with characteristic functions. As before for a random variable $X$ we denote by $\Phi _{X}(\xi )=\mathbb{E(}\exp (i\xi X)\mathbb{)}$ its characteristic function.
The following inequality holds
\begin{align}\label{ineq111}
\Big\vert \Phi_{\Lambda_N}\left(\frac{\sigma \left( \overline{U}_N\right)}{a_{N+1}}\xi \right)\Big\vert \leq 
\big\vert \Phi_{\varepsilon}\left( \xi \right) \big\vert
\big\vert \Phi_{\varepsilon}\left( p^{-\alpha}\xi \right) \big\vert .
\end{align}
Equations \eqref{ass-sigma-U} and \eqref{ass-sigma-V} yield
\begin{align*}
A_N = \frac{\sigma \left(\overline{U}_N\right) }{a_{N+1}}\to \left( \frac{\sigma ^2(1-p^{-1})}{(p^{-2\alpha}-p^{-1})(1-p^{-2\alpha})}\right) ^{1/2} := A.
\end{align*}
Moreover $\Phi_{\Lambda_N}(A_N \xi) \to \Phi_{\Lambda}(A \xi)$ pointwise.
As $\varepsilon$ has a bounded density, $\Phi_\varepsilon \in L^2(\mathbb{R})$ and therefore the function in the right-hand side of \eqref{ineq111} is in $L^1(\mathbb{R})$. This in turn implies that 
$\Phi_{\Lambda_N}(\xi) \to \Phi_\Lambda (\xi)$ in $L^1(\mathbb{R})$. It follows that the density $p_N$ of $\Lambda_N$ converges pointwise to the density $p_\Lambda$ of $\Lambda$. Finally Scheff\'{e}'s lemma \cite[Section 2.9]{VanDerVart} yields that $\Lambda_N \to \Lambda$ in the total variation distance, as desired.
\end{proof}

\section*{Acknowledgment}
We wish to thank G.P. Chistyakov and F. G\"{o}tze for fruitful discussions.

\bigskip

\begin{tabular}{ll}
\textsc{Alexander Bendikov}\textsc{\ \ \ \ \ \ \ \ \ \ } & \textsc{Wojciech Cygan} \\ 
Institute of Mathematics & Institute of Mathematics \\ 
University of Wroclaw, Poland & University of Wroclaw, Poland \\ 
bendikov@math.uni.wroc.pl & wojciech.cygan@uwr.edu.pl \\ 
\  & 
\end{tabular}

\end{document}